\thanks{\copyright 2016 Texas State University.}
\begin{document}
\title[\hfilneg EJDE-2016/119\hfil Critical points of the Allen-Cahn energy]
{Multiplicity of critical points for the fractional Allen-Cahn energy}

\author[D. Pagliardini \hfil EJDE-2016/119\hfilneg]
{Dayana Pagliardini}

\address{Dayana Pagliardini \newline
Scuola Normale Superiore,
Piazza dei Cavalieri 7,
56126 Pisa, Italy}
\email{dayana.pagliardini@sns.it}

\thanks{Submitted March 4, 2016. Published May 13, 2016.}
\subjclass[2010]{35R11, 58J37}
\keywords{Allen-Cahn energy; fractional PDE; critical point; genus}

\begin{abstract}
 In this article we study the fractional analogue of the Allen-Cahn energy
 in bounded domains. We show that it admits a number of critical points
 which approaches infinity as the perturbation parameter tends to zero.
\end{abstract}

\maketitle
\numberwithin{equation}{section}
\newtheorem{theorem}{Theorem}[section]
\newtheorem{lemma}[theorem]{Lemma}
\newtheorem{proposition}[theorem]{Proposition}
\newtheorem{remark}[theorem]{Remark}
\newtheorem{definition}[theorem]{Definition}
\allowdisplaybreaks

\section{Introduction}

The problems involving fractional operators attracted great attention 
during the previous years. Indeed these problems appear in areas 
such as optimization, finance, crystal dislocation, minimal surfaces,
 water waves, fractional diffusion; 
see for example \cite{D,CT,BKM,CRS,CV,Z,V}). 
In particular, from a probabilistic point of view, the fractional Laplacian 
is the infinitesimal generator of a L\'evy process, see e.g. \cite{B}.

In this article we present some existence and multiplicity results for 
critical points of functionals of the form
\begin{gather}\label{funz2}
F_\epsilon(u)=\int_{\Omega}{\int_{\Omega}{\frac{|u(x)-u(y)|^2}{|x-y|^{n+2s}}\,dx\,dy}}
+\frac{1}{\epsilon^{2s}}\int_{\Omega}{W(u)\,dx}, \quad \text{if }\, s\in (0,1/2),
\\
\label{funz1}
F_\epsilon(u)=\frac{1}{|\log \epsilon|}\int_{\Omega}
{\int_{\Omega}{\frac{|u(x)-u(y)|^2}{|x-y|^{n+1}}\,dx\,dy}}+\frac{1}{|\epsilon
\log \epsilon|}\int_{\Omega}{W(u)\,dx},\quad \text{if } s=1/2,
\\
\label{funz}
F_\epsilon(u)=\frac{{\epsilon}^{2s-1}}{2}
\int_{\Omega}{\int_{\Omega}{\frac{|u(x)-u(y)|^2}{|x-y|^{n+2s}}\,dx\,dy}}
+\frac{1}{\epsilon}\int_{\Omega}{W(u)\,dx},\quad \text{if } s\in (1/2,1),
\end{gather}
where $\Omega$ is a smooth bounded domain of $\mathbb{R}^n$,
$u\in H^s(\Omega;\mathbb{R})$, $W\in C^2(\mathbb{R}; \mathbb{R}^+)$
is the well known double well potential (see Section $2$), 
and $\epsilon \in \mathbb{R}^+$.

$F_\epsilon$ is the fractional energy of the Allen-Cahn equation.
 It is the fractional counterpart of the functionals  studied by 
Modica-Mortola in \cite{MM,MM'} where they proved the $\Gamma$-convergence 
of the energy to De Giorgi's perimeter.
In the same way, functionals $\eqref{funz2},\eqref{funz1}, \eqref{funz} $
 have been also considered by Valdinoci-Savin in \cite{SV}, where it 
is discussed their $\Gamma$-convergence.

Moreover, as proved in \cite{KS} for the functional
\[
\int_\Omega{[\epsilon |Du|^2+\epsilon^{-1}(u^2-1)^2]\,dx},
\]
we expect that the solutions have interesting geometric properties 
related to the interface minimality.

Some authors investigated multiplicity results of nontrivial solution for
\begin{equation}
\begin{gathered}
\epsilon^{2s}(-\Delta)^su+u=h(u)\quad \text{in }\Omega\\
u>0\\
u=0\quad \text{on } \partial \Omega
\end{gathered}
\end{equation}
where $\Omega$ is a bounded domain in $\mathbb{R}^n$, $n>2s$,  and $h(u)$
has a subcritical growth (see  \cite{FPS}), or for
\begin{equation}
\begin{gathered}
\epsilon^{2s}(-\Delta)^su+V(z)u=f(u)\quad \text{in }\mathbb{R}^n,\, n>2s\\
u\in H^s(\mathbb{R}^n)\\
u(z)>0\quad z\in \mathbb{R}^n
\end{gathered}
\end{equation}
where the potential $V:\mathbb{R}^n\to \mathbb{R}$ and the nonlinearity
 $f:\mathbb{R}\to \mathbb{R}$ satisfy suitable assumptions (see \cite{FS}).

Then Cabr\'e and Sire in \cite{CS} studied the equation
 \[
 (-\Delta)^su+G'(u)=0\quad \text{in }\mathbb{R}^n
 \]
where $G$ denotes the potential associated to a nonlinearity $f$, 
and they proved existence, uniqueness and qualitative properties of solutions.

Indeed, Passaseo in \cite{P} studied the analogue of our functional, 
with the classical Laplacian instead of the fractional one, i.e.,
\begin{equation}\label{Passasfunct}
f_\epsilon(u)=\epsilon \int_\Omega{|Du|^2\,dx}
+\frac{1}{\epsilon}\int_\Omega{G(u)\,dx}
\end{equation}
where $\Omega$ is a bounded domain of $\mathbb{R}^n$, 
$u\in H^{1,2}(\Omega;\mathbb{R})$, $G\in C^2(\mathbb{R};\mathbb{R}^+)$ 
is a nonnegative function having exactly two zeros, $\alpha$ and $\beta$, 
and $\epsilon$ is a positive parameter: he proved that the number of critical 
points for $f_\epsilon$ goes to $\infty$ as $\epsilon \to 0$.

Passaseo was motivated by De Giorgi's idea, contained in \cite{DG}, i.e. 
if $u_\epsilon\to u_0$ in $L^1(\Omega)$ as $\epsilon \to 0$ and 
$\lim_{\epsilon \to 0}{f_\epsilon(u_\epsilon)}<\infty$, then the function 
$U_\epsilon(t)$, defined as steepest descent curves for $f_\epsilon$ 
starting from $u_\epsilon$, converge to a curve $U_0(t)$ in $L^1(\Omega)$ 
such that $U_0(t)$ is a function with values in $\{\alpha,\beta\}$ for every 
$t\ge 0$ and the interface between the sets $E_t=\{x\in \Omega:U_0(t)(x)=\alpha \}$ 
and $\Omega \setminus E_t$ moves by mean curvature.
As a consequence the critical points $u_\epsilon$ of $f_\epsilon$ which satisfy
\begin{equation}\label{dg}
\liminf_{\epsilon \to \infty}{f_\epsilon(u_\epsilon)}<+\infty
\end{equation}
converge in $L^1(\Omega)$ to a function $u_0$ taking values in $\{\alpha,\beta\}$. De Giorgi considered also the problem of existence and multiplicity for nontrivial critical points of $f_\epsilon$ with the property $\eqref{dg}$, and Passaseo's critical points verify this property and
\[
\liminf_{\epsilon \to \infty}{f_\epsilon(u_\epsilon)}>0,
\]
so he can say that $u_0$ is nontrivial.

In this article we want to extend Passaseo's results by replacing the 
function $G$ in $\eqref{Passasfunct}$ with the double well potential $W$, 
and Passaseo's functional $f_\epsilon$ with its fractional counterpart.

The article is organized as follows:
in  Section $2$ we give some preliminaries definitions and results. 
In Section $3$, we define suitable functions and sets, then most of the work 
is dedicated to prove nonlocal estimates needful to obtain the bound from 
above of $F_\epsilon$, (see Lemma \ref{stima}), and the (PS)-condition,
 Lemma \ref{PScond}. In fact in particular for the first of these results, 
we had to split the domain in two types of regions and estimate $F_\epsilon$ 
in the three possible interactions.

Finally, after recalling a technical result, Lemma \ref{lemma4punti}, 
we can apply a classical Krasnoselsii's genus tool to show the existence
 and multiplicity results for solutions.

Hence, knowing that minimizers of $F_\epsilon$ $\Gamma$-converge to
 minimizers of the area functional, we hope that also min-max solutions 
can pass to the limit as $\epsilon \to 0$ in a suitable sense, producing 
critical points of positive index for local, if $s\in [1/2,1)$, or nonlocal, 
if $s\in (0,1/2)$, area functional.

\section{Notation and preliminary results}

In this section we introduce the framework that we will be used throughout 
this article.

Let $\Omega$ be a bounded domain of $\mathbb{R}^n$, denote by $|\Omega|$ its 
Lebesgue measure and consider $W$ the double well potential, that is
 an even function such that
\begin{equation}\label{W}
\begin{gathered}
W:\mathbb{R} \to [0,+\infty) \quad
W \in C^2(\mathbb{R}; \mathbb{R}^+) \quad
W(\pm 1)=0 \\
W >0 \text{ in } (-1,1) \quad
W'(\pm1)=0 \quad
W''(\pm1)>0.
\end{gathered}
\end{equation}
Now we fix the fractional exponent $s\in (0,1)$. For any $p\in [1,+\infty)$, 
we define
\[
W^{s,p}(\Omega):=\Big \{u\in L^p(\Omega): 
\frac{|u(x)-u(y)|}{|x-y|^{s+n/p}}\in L^p(\Omega \times \Omega)  \Big \};
\]
i.e. an intermediary Banach space between $L^p(\Omega)$ and 
$W^{1,p}(\Omega)$, endowed with the natural norm
\[
\|u\|_{W^{s,p}(\Omega)}:=\Big (\int_\Omega |u|^p\,dx
+\int_\Omega \int_\Omega{\frac{|u(x)-u(y)|^p}{|x-y|^{n+sp}}\,dx\,dy}\Big)^{1/p}.
\]
If $p=2$ we define $W^{s,2}(\Omega)=H^s(\Omega)$ and it is a Hilbert space.
 Now let $\mathscr S'(\mathbb{R}^n)$ be the set of all temperated distributions,
 that is the topological dual of $\mathscr S(\mathbb{R}^n)$. 
As usual, for any $\varphi \in \mathscr S(\mathbb{R}^n)$, we denote by
\[
\mathscr{F}\varphi (\xi)=\frac{1}{{(2\pi)}^{n/2}}
\int_{\mathbb{R}^n} e^{-i\xi\cdot x}\varphi(x)\,dx
\]
the Fourier transform of $\varphi$ and we recall that one can extend 
$\mathscr{F}$ from $\mathscr S(\mathbb{R}^n)$ to $\mathscr S'(\mathbb{R}^n)$.
At this point we can define, for any 
$u\in \mathscr S(\mathbb{R}^n)$ and $s\in (0,1)$, the fractional Laplacian 
operator as
\[
(-\Delta)^su(x)=C(n,s)P.V.\int_{\mathbb{R}^n}\frac{u(x)-u(y)}{|x-y|^{n+2s}}\,dy.
\]
Here P.V. stands for the Cauchy principal value and $C(n,s)$ is a normalizing constant (see \cite{DPV} for more details).
It is easy to prove that this definition is equivalent to the following two:
\[
(-\Delta)^su(x)=-\frac{1}{2}C(n,s)\int_{\mathbb{R}^n}\frac{u(x+y)+u(x-y)-2u(x)}{|y|^{n+2s}}\,dy\quad \forall  x\in \mathbb{R}^n,
\]
and
\[
(-\Delta)^su(x)=\mathscr{F}^{-1}(|\xi|^{2s}(\mathscr{F}u))\quad 
\forall \xi \in \mathbb{R}^n.
\]
Now we recall some embedding's results for the fractional spaces:

\begin{proposition}[\cite{DPV}] \label{Contemb}
Let $p\in [1, +\infty)$ and $0<s\le s'\le 1$. Let $\Omega$ be an open 
set of $\mathbb{R}^n$ and $u:\Omega \to \mathbb{R}$ be a measurable function. Then $W^{s',p}(\Omega)$ is continuously embedded in $W^{s,p}(\Omega)$, denoted by $W^{s',p}(\Omega)\hookrightarrow W^{s,p}(\Omega)$,
and the following inequality holds
\[
\|u\|_{W^{s,p}(\Omega)}\le C\|u\|_{W^{s',p}(\Omega)}
\]
for some suitable positive constant $C=C(n,s,p)\ge 1$.

Moreover, if also $\Omega$ is an open set of $\mathbb{R}^n$ of class $C^{0,1}$ 
with bounded boundary, then 
$W^{1,p}(\Omega)\hookrightarrow W^{s,p}(\Omega)$ and we have
\[
\|u\|_{W^{s,p}(\Omega)}\le C\|u\|_{W^{1,p}(\Omega)}
\]
for some suitable positive constant $C=C(n,s,p)\ge 1$.
\end{proposition}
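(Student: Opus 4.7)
The plan is to prove both inclusions by the same strategy: split the Gagliardo double integral according to whether $|x-y|\le 1$ or $|x-y|>1$, estimate the ``far'' tail by the $L^p$ norm, and handle the ``near'' diagonal portion by exploiting the relative sizes of the exponents.

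\smallskip

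\textbf{First inclusion.} Given $u\in W^{s',p}(\Omega)$, I would write $[u]_{s,p}^p = I_1+I_2$ with $I_1,I_2$ corresponding to the regions $|x-y|\le 1$ and $|x-y|>1$. On the near region one has $|x-y|^{sp}\ge |x-y|^{s'p}$, since $s\le s'$ and $|x-y|\le 1$, so the integrand is pointwise dominated by the $W^{s',p}$-Gagliardo integrand, giving $I_1\le [u]_{s',p}^p$. On the far region, I would apply $|u(x)-u(y)|^p\le 2^{p-1}(|u(x)|^p+|u(y)|^p)$, use Fubini, and note that $\int_{|z|>1}|z|^{-n-sp}\,dz$ is finite, hence $I_2\le C\|u\|_{L^p(\Omega)}^p$. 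Adding $\|u\|_{L^p}^p$ to both sides yields the claim.

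\smallskip

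\textbf{Second inclusion.} For $u\in W^{1,p}(\Omega)$, since $\partial\Omega$ is $C^{0,1}$ there is an extension operator producing $\tilde u\in W^{1,p}(\mathbb{R}^n)$ with $\|\tilde u\|_{W^{1,p}(\mathbb R^n)}\le C\|u\|_{W^{1,p}(\Omega)}$. I would split $[u]_{s,p}^p=I_1+I_2$ as above. The far part $I_2$ is controlled by $\|u\|_{L^p}^p$ exactly as before. For the near part, I would use the integral representation $u(x)-u(y)=\int_0^1 \nabla\tilde u(y+t(x-y))\cdot (x-y)\,dt$ together with Jensen's inequality to obtain $|u(x)-u(y)|^p\le |x-y|^p\int_0^1 |\nabla\tilde u(y+t(x-y))|^p\,dt$. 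Substituting, changing variables $z=x-y$, and applying Fubini plus translation invariance in $y$ leaves the integral $\int_{|z|\le 1}|z|^{-n+(1-s)p}\,dz$, which converges because $(1-s)p>0$. This yields $I_1\le C\|\nabla\tilde u\|_{L^p(\mathbb R^n)}^p\le C'\|u\|_{W^{1,p}(\Omega)}^p$.

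\smallskip

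\textbf{Main obstacle.} The only real subtlety lies in the change-of-variable step of the second inclusion: after substituting $z=x-y$ the inner integrand $|\nabla\tilde u(y+tz)|^p$ depends on all three of $y$, $z$, and $t$, so one must be careful to swap the order of integration correctly and exploit that for fixed $t$ and $z$ the map $y\mapsto y+tz$ is a translation, hence $\int_\Omega |\nabla\tilde u(y+tz)|^p\,dy\le \|\nabla\tilde u\|_{L^p(\mathbb R^n)}^p$. The role of the Lipschitz hypothesis is precisely to guarantee the existence of such an extension; without it, the representation of $u(x)-u(y)$ as a line integral of $\nabla u$ would break down when the segment $[x,y]$ leaves $\Omega$.
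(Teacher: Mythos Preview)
The paper does not supply its own proof of this proposition: it is quoted verbatim from \cite{DPV} and used as a black box (see the citation marker in the statement and the absence of any proof environment following it). There is therefore no in-paper argument to compare against.

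That said, your sketch is correct and is essentially the standard proof given in \cite{DPV}. The near/far splitting and the tail estimate via $\int_{|z|>1}|z|^{-n-sp}\,dz<\infty$ are exactly how the first inclusion is handled there. For the second inclusion your strategy---extend to $\mathbb{R}^n$ using the Lipschitz boundary, represent the increment as a line integral of the gradient, and reduce to the convergent integral $\int_{|z|\le 1}|z|^{-n+(1-s)p}\,dz$---also matches the reference. One small technical point you should make explicit: the identity
\[
\tilde u(x)-\tilde u(y)=\int_0^1 \nabla\tilde u\bigl(y+t(x-y)\bigr)\cdot(x-y)\,dt
\]
is immediate for smooth $\tilde u$ but not literally valid pointwise for a generic $W^{1,p}$ function; one should either invoke the density of $C^\infty\cap W^{1,p}$ in $W^{1,p}(\mathbb{R}^n)$ and pass to the limit in the final inequality, or appeal to the ACL characterisation of Sobolev functions. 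Your identification of the change-of-variables/Fubini step as the only delicate point is accurate, and your explanation of why the Lipschitz hypothesis is needed (to guarantee the extension, so that segments stay in the domain of the extended function) is correct.
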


\begin{definition}[\cite{DPV}] \rm
For any $s\in(0,1)$ and any $p\in[1, +\infty)$, we say that an open set 
$\Omega \subseteq \mathbb{R}^n$ is an extension domain for $W^{s,p}$ 
if there exists a positive constant $C=C(n,p,s,\Omega)$ such that: for 
every function $u\in W^{s,p}(\Omega)$ there exists 
$\tilde{u}\in W^{s,p}(\mathbb{R}^n)$ with $\tilde{u}(x)=u(x)$ for all 
$x\in \Omega$ and 
$\|\tilde{u}\|_{W^{s,p}(\mathbb{R}^n)}\le C \|u\|_{W^{s,p}(\Omega)}$.
\end{definition}

\begin{theorem}[\cite{DPV}] \label{Compemb}
Let $s\in (0,1)$ and $p\in [1, +\infty)$ be such that $sp<n$. 
Let $q\in [1,p^*)$, where $p^*=p^*(n,s)=np/(n-sp)$ is the so-called  
``fractional critical exponent''. Let $\Omega \subseteq \mathbb{R}^n$ 
be a bounded extension domain for $W^{s,p}$ and $\mathscr{I}$ be a 
bounded subset of $L^p(\Omega)$. Suppose that
\[
\sup_{f\in \mathscr{I}}\int_\Omega \int_\Omega 
\frac{|f(x)-f(y)|^p}{|x-y|^{n+sp}}\,dx\,dy<\infty.
\]
Then $\mathscr{I}$ is pre-compact in $L^q(\Omega)$.
\end{theorem}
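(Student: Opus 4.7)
The plan is to follow the classical Riesz--Fréchet--Kolmogorov approach adapted to the Gagliardo seminorm, with a mollification estimate playing the central role.

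First I would use the extension property of $\Omega$: for each $f\in\mathscr{I}$, fix $\tilde f\in W^{s,p}(\mathbb{R}^n)$ with $\tilde f|_\Omega=f$ and $\|\tilde f\|_{W^{s,p}(\mathbb{R}^n)}\le C\|f\|_{W^{s,p}(\Omega)}$. Multiplying by a fixed smooth cutoff $\chi$ equal to $1$ on $\Omega$ and supported in a slightly larger ball $B$, I can assume that all extensions live in $W^{s,p}(\mathbb{R}^n)$, are supported in the common compact set $\overline{B}$, and still form a bounded family (the cutoff contributes only a bounded extra seminorm because $\chi$ is Lipschitz with compact support). In particular, by the Sobolev embedding $W^{s,p}(\mathbb{R}^n)\hookrightarrow L^{p^*}(\mathbb{R}^n)$, valid since $sp<n$, this family is uniformly bounded in $L^{p^*}(B)$.

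The heart of the argument is pre-compactness in $L^p(B)$, which I would verify via mollification. Let $\rho_\varepsilon$ be a standard mollifier supported in $B_\varepsilon$. The key fractional estimate is
\[
\|\tilde f - \tilde f \ast \rho_\varepsilon\|_{L^p(\mathbb{R}^n)}^{p}
\le C\,\varepsilon^{sp}\,[\tilde f]_{W^{s,p}(\mathbb{R}^n)}^{p},
\]
obtained by writing $\tilde f(x)-\tilde f\ast\rho_\varepsilon(x)=\int\rho_\varepsilon(y)\bigl(\tilde f(x)-\tilde f(x-y)\bigr)\,dy$, applying Jensen's inequality with respect to the probability measure $\rho_\varepsilon(y)\,dy$, and then bounding $\rho_\varepsilon(y)\le C\varepsilon^{-n}\mathbf{1}_{B_\varepsilon}(y)$ to artificially insert the factor $|y|^{n+sp}$ that produces the Gagliardo seminorm after integration in $x$ and $y$. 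This yields uniform $L^p$-approximation $\tilde f\approx\tilde f\ast\rho_\varepsilon$ across $\mathscr{I}$. For each fixed $\varepsilon$, the mollified family $\{\tilde f\ast\rho_\varepsilon\}_{f\in\mathscr{I}}$ is uniformly bounded and equicontinuous on $\overline{B}$, hence pre-compact in $C^0(\overline B)$ by Arzelà--Ascoli, and a fortiori in $L^p(B)$. A standard total-boundedness argument, letting $\varepsilon\to 0$, then transfers pre-compactness to $\{\tilde f\}$, giving pre-compactness of $\mathscr{I}$ in $L^p(\Omega)$.

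Finally, I would upgrade the conclusion from $L^p$ to $L^q$ for every $q\in[1,p^*)$. If $q\le p$, Hölder's inequality on the bounded set $\Omega$ gives $L^p(\Omega)\hookrightarrow L^q(\Omega)$ and there is nothing more to prove. If $p<q<p^*$, the interpolation inequality
\[
\|g\|_{L^q(\Omega)}\le \|g\|_{L^p(\Omega)}^{\theta}\|g\|_{L^{p^*}(\Omega)}^{1-\theta},\qquad \tfrac{1}{q}=\tfrac{\theta}{p}+\tfrac{1-\theta}{p^*},\quad \theta\in(0,1),
\]
combined with the uniform $L^{p^*}$ bound from the Sobolev embedding, shows that any subsequence Cauchy in $L^p(\Omega)$ is automatically Cauchy in $L^q(\Omega)$. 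The main obstacle in the whole scheme is the translation-continuity step: without a gradient at our disposal we must extract the required modulus of continuity from the purely nonlocal Gagliardo seminorm, and the mollification identity above is precisely the mechanism that performs this conversion.
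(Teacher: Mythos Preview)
The paper does not supply a proof of this theorem: it is quoted verbatim from \cite{DPV} and used as a black box in Lemma~\ref{PScond}. So there is no ``paper's own proof'' to compare against.

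That said, your argument is correct and is in fact the strategy used in the cited reference \cite{DPV}: extend to $\mathbb{R}^n$, localize by a Lipschitz cutoff, use the mollification estimate $\|\tilde f-\tilde f*\rho_\varepsilon\|_{L^p}^p\le C\varepsilon^{sp}[\tilde f]_{W^{s,p}}^p$ together with Arzel\`a--Ascoli to obtain pre-compactness in $L^p$, and then interpolate between $L^p$ and the uniform $L^{p^*}$ bound coming from the fractional Sobolev inequality to reach every $q\in[1,p^*)$. One small point worth making explicit is that multiplication by the cutoff $\chi$ keeps the Gagliardo seminorm uniformly bounded; this follows by writing $\chi(x)\tilde f(x)-\chi(y)\tilde f(y)=\chi(x)(\tilde f(x)-\tilde f(y))+\tilde f(y)(\chi(x)-\chi(y))$ and using that $\chi$ is Lipschitz with compact support, together with the $L^p$ bound on $\tilde f$. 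With that detail filled in, the outline is complete.
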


We recall also the notion of Krasnoselskii's genus, useful in the sequel.

\begin{definition}[\cite{AA}]\label{genus} \rm
Let $H$ be a Hilbert space and $E$ be a closed subset of $H\setminus \{0\}$, 
symmetric with respect to $0$ (i.e. $E=-E$).

We call genus of $E$ in $H$, indicated with $\operatorname{gen}_H (E)$, 
the least integer $m$ such that there exists $\phi\in C(H;\mathbb{R})$ 
such that $\phi$ is odd and $\phi(x)\neq 0$ for all $x\in E$.

We set $\operatorname{gen}_H(E)=+\infty$ if there are no integer with the above property and $\operatorname{gen}_H (\emptyset)=0$.
\end{definition}

It is well known that $\operatorname{gen}_H (S^k)=k+1$ if $S^k$ is a $k$-dimensional sphere of $H$ with centre in zero.

Finally we recall a well known result:

\begin{theorem}[\cite{AA}]\label{THilbert}
Let $H$ be a Hilbert space and $f:H\to \mathbb{R}$ be an even $C^2$-functional 
satisfying the following Palais-Smale condition: given a sequence 
$(u_i)_i$ in $H$ such that the sequence $(f(u_i))_i$ is bounded and 
$f'(u_i)\to 0$, $(u_i)_i$ is relatively compact in $H$.

Set $f^c=\{u\in H: f(u)\le c\}$ for all $c\in \mathbb{R}$.
Then, for all $c_1$, $c_2\in \mathbb{R}$, such that $c_1\le c_2<f(0)$, we have
\begin{equation}
\operatorname{gen}_H(f^{c_2})\le \operatorname{gen}_H(f^{c_1})
+ \# \{(-u_i,u_i) : c_1\le f(u_i)\le c_2,\, f'(u_i)=0\},
\end{equation}
where, if $A$ is a set, we indicate with $\#A$ the cardinality of $A$.
\end{theorem}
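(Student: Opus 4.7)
The plan is to combine the classical Lusternik--Schnirelman deformation machinery with the $\mathbb{Z}_2$-equivariance coming from evenness of $f$. First, the hypothesis $c_2<f(0)$ ensures $0\notin f^{c_2}$, so $f^{c_2}$ is a closed symmetric subset of $H\setminus\{0\}$ and its genus is well defined. Set
$$K:=\{u\in H:c_1\le f(u)\le c_2,\ f'(u)=0\}.$$
Since $f$ is even, $f'$ is odd, so $K=-K$, and the Palais--Smale condition forces $K$ to be compact. If $K$ is infinite the right-hand side of the claimed inequality is $+\infty$ and there is nothing to prove; so I may assume $K$ consists of finitely many antipodal pairs $\{\pm u_1\},\dots,\{\pm u_m\}$.

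Second, I would construct an equivariant deformation from $f^{c_2}\setminus K$ into $f^{c_1}$. Choose pairwise disjoint symmetric open neighborhoods $N_i$ of $\{\pm u_i\}$, small enough that $0\notin \overline{N}$ where $\overline{N}:=\overline{\bigcup_i N_i}$. Each $N_i$ retracts equivariantly onto a two-point set $\{\pm u_i\}$, so $\operatorname{gen}_H(\overline{N_i})=1$, and by subadditivity $\operatorname{gen}_H(\overline{N})\le m$. On the complementary piece $A:=\{c_1\le f\le c_2\}\setminus N$ the Palais--Smale condition yields $\inf_A\|f'\|>0$: otherwise a PS sequence in $A$ would accumulate in $K\setminus N=\emptyset$. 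Starting from any locally Lipschitz pseudo-gradient field $V$ for $f$ and averaging it through $V(u)\mapsto \tfrac12(V(u)-V(-u))$ produces an odd pseudo-gradient (the pseudo-gradient inequality is preserved because $f'$ is odd), whose negative flow $\eta_t$ is continuous and odd in $u$. For some finite $T>0$, $\eta_T$ maps $f^{c_2}\setminus N$ into $f^{c_1}$.

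Third, I combine the pieces with the standard axioms of Krasnoselskii's genus: monotonicity under odd continuous maps and subadditivity under finite unions of closed symmetric sets. Using $f^{c_2}\subseteq \overline{f^{c_2}\setminus N}\cup \overline{N}$ and applying these axioms together with the odd map $\eta_T$ from the previous step gives
$$\operatorname{gen}_H(f^{c_2})\le \operatorname{gen}_H(\overline{f^{c_2}\setminus N})+\operatorname{gen}_H(\overline{N})\le \operatorname{gen}_H(\eta_T(\overline{f^{c_2}\setminus N}))+m\le \operatorname{gen}_H(f^{c_1})+m,$$
which is precisely the claimed bound, since $m$ is exactly the cardinality of the antipodal critical pairs in the strip.

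The main obstacle I expect is the rigorous construction of the equivariant deformation: one has to verify that the pseudo-gradient field can be chosen odd without losing the pseudo-gradient inequality, and that the associated flow pushes all of $f^{c_2}\setminus N$ below level $c_1$ in a uniform finite time. Both issues are handled by the Palais--Smale condition (which gives the uniform lower bound on $\|f'\|$ on $A$) and by the averaging trick for $V$; the remainder of the argument is bookkeeping with the axioms of the genus.
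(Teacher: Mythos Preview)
The paper does not prove this theorem at all: it is quoted from \cite{AA} (Ambrosetti--Malchiodi) as a known tool and no argument is given in the text. Your sketch is the standard Lusternik--Schnirelman proof one finds in that reference, combining the equivariant deformation lemma with the subadditivity and monotonicity axioms of the Krasnoselskii genus, so in spirit it matches exactly what the cited source does.

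One technical point worth tightening: when you pass from the lower bound $\inf_A\|f'\|>0$ on $A=\{c_1\le f\le c_2\}\setminus N$ to the claim that the flow $\eta_T$ carries $\overline{f^{c_2}\setminus N}$ into $f^{c_1}$, note that trajectories starting in $A$ may enter $N$, where you have no control on $\|f'\|$. The usual remedy is to take nested symmetric neighborhoods $N'\subset\subset N$ of $K$, cut off the pseudo-gradient so that it vanishes on $N'$, and observe that trajectories from $f^{c_2}\setminus N$ either reach level $c_1$ or get trapped near $\partial N'$; the (PS) condition then rules out the latter unless the trajectory has already dropped below $c_1$. You correctly flag this as the main obstacle, but the resolution needs the two-neighborhood trick (or an appeal to the quantitative equivariant deformation lemma in \cite{AA}), not merely the averaging of $V$.
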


For the rest of this article, we consider $H^s(\Omega)$ as Hilbert space 
and we shall write simply $\operatorname{gen}(E)$ instead of 
$\operatorname{gen}_{H^s(\Omega)}(E)$; 
then we refer to the Palais-Smale condition with the symbol $(PS)$-condition.

\section{Multiplicity of critical points}

Let us state the fundamental result of the paper.

\begin{theorem}\label{Tfond}
Let $\Omega$ be a smooth bounded domain of $\mathbb{R}^n$ and $W$ be a 
function satisfying  \eqref{W}.
Then there exist two sequences of positive numbers $(\epsilon_k)_k$, $(c_k)_k$ 
such that for every $\epsilon \in (0, \epsilon_k)$, the functional $F_\epsilon$
has at least $k$ pairs
\[
(-u_{1, \epsilon},u_{1, \epsilon}),\dots, (-u_{k, \epsilon},u_{k, \epsilon})
\]
of critical points, all of them different from the constant pair $(-1,1)$ 
satisfying
\begin{gather*}
-1\le u_{i,\epsilon}(x)\le 1\quad \forall x \in \Omega,\; 
\forall \epsilon \in (0, \epsilon_k),\; i=1,\dots k; \\
F_\epsilon(u_{i,\epsilon})\le c_k \quad \forall  \epsilon \in (0, \epsilon_k),
\;  i=1,\dots, k.
\end{gather*}
Moreover, for all $\epsilon \in (0, \epsilon_k)$ and all $i=1,\dots, k$ we have
\begin{equation}\label{stimamin}
F_\epsilon(u)\ge \min \big\{F_\epsilon(u): u\in H^s(\Omega),-1\le u(x)
\le 1\text{ for } x \in \Omega, \int_{\Omega}{u\,dx}=0\big\}.
\end{equation}
\end{theorem}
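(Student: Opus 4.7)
The plan is to apply the symmetric min-max machinery of Theorem \ref{THilbert} to the even functional $F_\epsilon$. Two ingredients must be assembled: (i) for each $k$, a symmetric set $A_k \subset H^s(\Omega)$ with $\operatorname{gen}(A_k) \ge k+1$, contained in $\{u : -1 \le u \le 1\}$, on which $F_\epsilon$ is uniformly bounded by some $c_k$ independent of $\epsilon$ for $\epsilon \in (0, \epsilon_k)$; and (ii) the Palais-Smale condition, provided by Lemma \ref{PScond}. Granted these, the genus inequality in Theorem \ref{THilbert} will yield at least $k$ pairs of critical points in the sublevel $F_\epsilon^{c_k}$ distinct from the constant minimizers $\pm 1$.

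For the construction of $A_k$, I would partition $\Omega$ into $k+1$ smooth pieces $\Omega_1, \dots, \Omega_{k+1}$ of equal Lebesgue measure, and for each $a = (a_1, \dots, a_{k+1}) \in S^k$ define $u_{a, \epsilon}$ to be a smoothed step function which, to leading order, equals $\operatorname{sign}(a_i)$ on $\Omega_i$, with a one-dimensional transition profile (valued in $[-1,1]$) of width comparable to $\epsilon$ across the interfaces. The map $a \mapsto u_{a, \epsilon}$ is odd and continuous, so $A_k := \{u_{a, \epsilon} : a \in S^k\}$ inherits $\operatorname{gen}(A_k) \ge \operatorname{gen}(S^k) = k+1$. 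The uniform bound $\sup_a F_\epsilon(u_{a,\epsilon}) \le c_k$ is precisely the content of Lemma \ref{stima}.

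Next I apply Theorem \ref{THilbert} with $c_1$ slightly positive and $c_2 = c_k$, both strictly less than $F_\epsilon(0)$, which blows up as $\epsilon \to 0$ because its potential term carries a factor $\epsilon^{-\alpha} W(0) |\Omega|$ with $\alpha > 0$. For $c_1$ above $0$ but small enough that $F_\epsilon^{c_1}$ consists essentially of two disjoint $H^s$-neighbourhoods of $\pm 1$, one has $\operatorname{gen}(F_\epsilon^{c_1}) = 1$, while $\operatorname{gen}(F_\epsilon^{c_k}) \ge k+1$. The theorem then produces at least $k$ pairs of critical points $\pm u_{i, \epsilon}$ at levels $c_i^\epsilon \in (c_1, c_k]$, hence different from the constant pair. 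The pointwise bound $|u_{i,\epsilon}| \le 1$ follows from a truncation argument: the Lipschitz cutoff $T(t) = \max\{-1, \min\{t, 1\}\}$ satisfies $|T(x)-T(y)| \le |x-y|$, so it does not increase the Gagliardo seminorm, and since $W \ge 0 = W(\pm 1)$ it cannot increase $\int_\Omega W$ either. The lower bound \eqref{stimamin} comes from the min-max characterization $c_i^\epsilon = \inf\{\sup_{u \in A} F_\epsilon(u) : \operatorname{gen}(A) \ge i+1\}$: every symmetric set $A$ of genus $\ge 2$ contains a function $u$ with $\int_\Omega u\,dx = 0$ (since $u \mapsto \int_\Omega u$ is continuous and odd), so $c_i^\epsilon$ dominates the constrained minimum.

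The main technical obstacle, already foreshadowed by the authors, is the uniform upper bound in Lemma \ref{stima}: unlike in Passaseo's local setting \cite{P}, the nonlocal Gagliardo seminorm of a step-like test function generates long-range cross-interactions between distinct interface layers, which must be shown to be of lower order than the leading interface contribution. The three-region splitting described in the introduction, dividing the double integral according to whether each integration variable lies in the transition layer or far from it, is the key device. A secondary subtlety is that the correct scaling of the transition width depends on the regime $s \in (0, 1/2)$, $s = 1/2$, or $s \in (1/2, 1)$, matching the $\Gamma$-convergence results of \cite{SV}; each case must be handled with its own choice of profile so as to yield a bound uniform in $\epsilon$.
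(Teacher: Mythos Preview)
Your overall strategy is close to the paper's, but there is a genuine gap in the step where you conclude $|u_{i,\epsilon}|\le 1$. The argument you give --- that the Lipschitz truncation $T$ does not increase the Gagliardo seminorm and, because $W\ge 0=W(\pm1)$, does not increase the potential term --- only shows $F_\epsilon(Tu)\le F_\epsilon(u)$. That is an energy-comparison statement, and it constrains \emph{minimizers}, not arbitrary critical points. The $u_{i,\epsilon}$ you produce are min-max critical points, and nothing in your argument rules out $|u_{i,\epsilon}|>1$ on a set of positive measure. The paper handles this by first replacing $W$ with an even $\overline W\in C^2$ satisfying $\overline W=W$ on $[-1,1]$, $t\,\overline W'(t)>0$ for $|t|>1$, and enough growth at infinity to guarantee Palais--Smale for the modified functional $\overline F_\epsilon$. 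One then tests the Euler--Lagrange equation for $\overline F_\epsilon$ against $v=\overline u-\hat u$, where $\hat u=\max\{\min\{\overline u,1\},-1\}$: the bilinear Gagliardo term is nonnegative, and the sign condition on $\overline W'$ forces $\overline u=\hat u$. Hence every critical point of $\overline F_\epsilon$ already lies in $[-1,1]$ and is therefore also a critical point of $F_\epsilon$. Without this modification you also cannot directly invoke Lemma~\ref{PScond}, whose proof uses quadratic growth of the potential.

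Two smaller points. First, your test set $A_k$ is built from a partition of $\Omega$ into $k+1$ pieces, whereas the paper's $S_\epsilon^k$ (Definition~\ref{def3.3}) uses the sign of $\sum_m \lambda^{(m)}\cos(m x_1)$ mollified at scale $\epsilon$; Lemma~\ref{stima} is proved for that specific family, so citing it for your $A_k$ is a mismatch, although the same three-region splitting would yield an analogous bound. Second, for the lower bound \eqref{stimamin} your genus argument delivers the minimum over $\{u\in H^s(\Omega):\int_\Omega u=0\}$, while the statement asks for the minimum over $\{|u|\le 1,\ \int_\Omega u=0\}$. The paper closes this gap by taking a sequence $\overline W_j\to+\infty$ off $[-1,1]$ and passing to the limit in the corresponding constrained infima; your argument gives the first step but not this approximation.
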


\begin{remark} \rm
The constant function $u\equiv0$ is obviously a critical point for the 
functional $F_\epsilon$ for every $\epsilon>0$ but it is not included among 
the ones given by Theorem \ref{Tfond}. Instead if $s\in(1/2,1)$, but 
for the other cases it is similar,
\[
 F_\epsilon(0)=\frac{1}{\epsilon}W(0)|\Omega|\to +\infty \quad \text{as }
 \epsilon \to 0.
 \]

Moreover, since $\inf \{W(t):W'(t)=0, -1<t<1\}>0$,  one can say that the 
critical points given by Theorem \ref{Tfond} are not constant functions.
In fact, if $u_\epsilon=c_\epsilon$ is a constant critical point for 
$F_\epsilon$ (distinct from $-1$ and $1$), it must be $W'(c_\epsilon)=0$ 
and $-1<c_\epsilon <1$; therefore
\begin{equation}
W(c_\epsilon)\ge \inf \{W(t):W'(t)=0, -1<t<1\}>0
\end{equation}
and so, for example by considering the functional related 
to $s\in(1/2,1)$, but the other cases are similar,
\begin{equation}
F_\epsilon(c_\epsilon)=\frac{1}{\epsilon}W(c_\epsilon)|\Omega|\to +\infty 
\quad \text{ as } \epsilon \to 0,
\end{equation}
in contradiction with $F_\epsilon(c_\epsilon)\le c_k$ for all
 $\epsilon \in (0, \epsilon_k)$.

Notice that for all $\epsilon >0$,
\begin{equation}
\min \big\{F_\epsilon(u): u\in H^s(\Omega),-1\le u(x)
\le 1\quad \forall x \in \Omega, \int_{\Omega}{u\,dx}=0\big\}>0
\end{equation}
if we assume, without loss of generality, that $\Omega$ is a connected domain.

Let $\bar u$ be a minimizing function; if we assume $F_\epsilon(\bar{u})=0$, then
\[
\int_\Omega {\int_\Omega {\frac{|\bar{u}(x)-\bar{u}(y)|^2}{|x-y|^{n+2s}}\,dx\,dy}}
\equiv 0
\]
and $ W(\bar{u})\equiv 0$. Therefore we must have $\bar{u}\equiv 0$ in 
contradiction with $W(0)>0$.
\end{remark}

\begin{definition} \label{def3.3} \rm
Let $k$ be a fixed positive integer; for every 
$\lambda=( \lambda^{(0)}, \dots, \lambda^{(k)})\in \mathbb{R}^{k+1}$ 
define the function $\varphi_\lambda:\mathbb{R}\to \mathbb{R}$ by
\[
\varphi_\lambda(t)=\sum_{m=0}^{k}{\lambda^{(m)} \cos(mt)}.
\]
For every $\lambda \in \mathbb{R}^{k+1}$ with $|\lambda|_{\mathbb{R}^{k+1}}=1$ 
and $\epsilon >0$, let $L_{\epsilon}(\varphi_{\lambda}):\mathbb{R}\to \mathbb{R}$ 
be the function defined by
\[
L_{\epsilon}(\varphi_{\lambda})(t)=\frac{1}{2\epsilon}
\int_{t-\epsilon}^{t+\epsilon} \frac{\varphi_\lambda(\tau)}
{|\varphi_\lambda(\tau)|}\,d\tau;
\]
notice that $L_\epsilon(\varphi_\lambda)$ is well defined because 
$\varphi_\lambda$ has only isolated zeros $\forall  \lambda \in \mathbb{R}^{k+1}$ 
with $|\lambda|_{\mathbb{R}^{k+1}}=1$.

For  $x=(x_1,\cdots,x_n)\in \Omega \subset \mathbb{R}^n$ we consider the projection 
onto the first component, $P_1(x)=x_1$, and the set
\[
S_\epsilon^k=\{L_\epsilon(\varphi_\lambda)\circ P_1:\lambda \in \mathbb{R}^{k+1}, 
|\lambda|_{\mathbb{R}^{k+1}}=1\}.
\]
\end{definition}

\begin{lemma}
Let us fix $a, b\in \mathbb{R}$ with $a<b$ and set
\[
\chi(\varphi_\lambda)=\#\{t\in [a,b]:\varphi_\lambda(t)=0\}
\]
for $\lambda\in \mathbb{R}^{k+1}$ with $|\lambda|_{\mathbb{R}^{k+1}}=1$.
Then for every $k\in \mathbb N$ we have
\[
\sup\{\chi(\varphi_\lambda): \lambda\in \mathbb{R}^{k+1}, 
|\lambda|_{\mathbb{R}^{k+1}}=1\}<+\infty.
\]
\end{lemma}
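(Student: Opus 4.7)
My plan is to observe that $\varphi_\lambda$ is a real trigonometric polynomial of degree at most $k$, and that any nonzero such polynomial has uniformly bounded many zeros in any finite interval. The uniform bound depending only on $k$ (and then on $b-a$) follows from a standard reduction to an algebraic polynomial via the substitution $z=e^{it}$.

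First I would show that $\varphi_\lambda\not\equiv 0$ whenever $|\lambda|_{\mathbb{R}^{k+1}}=1$. This is immediate from the linear independence of the family $\{\cos(mt):m=0,1,\dots,k\}$ (for example, via orthogonality on $[0,2\pi]$), so the condition $\lambda\ne 0$ forces $\varphi_\lambda$ to be nontrivial. Next I would substitute $z=e^{it}$ and use $\cos(mt)=\tfrac12(z^m+z^{-m})$ to write
\[
2z^{k}\varphi_\lambda(t)=2\lambda^{(0)}z^{k}+\sum_{m=1}^{k}\lambda^{(m)}\bigl(z^{k+m}+z^{k-m}\bigr)=:Q_\lambda(z),
\]
where $Q_\lambda$ is a polynomial in $z$ of degree at most $2k$. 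A quick inspection of the coefficients shows that $Q_\lambda\not\equiv 0$ as soon as some $\lambda^{(m)}$ is nonzero, so $Q_\lambda$ has at most $2k$ complex roots (counted without multiplicity).

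Since $e^{ikt}\ne 0$, the zeros of $\varphi_\lambda$ in any period $[t_0,t_0+2\pi)$ are in bijection (via $t\mapsto e^{it}$) with the roots of $Q_\lambda$ lying on the unit circle, and therefore there are at most $2k$ of them. Covering $[a,b]$ with at most $N:=\lceil (b-a)/(2\pi)\rceil+1$ intervals of length $2\pi$ then yields
\[
\chi(\varphi_\lambda)\le 2k\Bigl(\bigl\lceil (b-a)/(2\pi)\bigr\rceil+1\Bigr),
\]
which is independent of $\lambda$, proving the claim.

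The only slightly subtle point is checking that $Q_\lambda\not\equiv 0$ (i.e., that no algebraic cancellation can collapse the degree) — but this is transparent from the coefficient formula above: the coefficient of $z^{k+m}$ in $Q_\lambda$ equals $\lambda^{(m)}$ for $m\ge 1$ and the coefficient of $z^k$ equals $2\lambda^{(0)}$, so $Q_\lambda=0$ forces $\lambda=0$. I do not expect any genuine obstacle; the argument is essentially that trigonometric polynomials of bounded degree have a uniformly bounded number of zeros per period, which reduces to the fundamental theorem of algebra.
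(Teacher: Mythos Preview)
Your argument is correct and is the standard proof: reduce the trigonometric polynomial to an algebraic one via $z=e^{it}$, invoke the fundamental theorem of algebra to get at most $2k$ zeros per period, and then cover $[a,b]$ by finitely many periods. The only cosmetic remark is that your covering bound $\lceil (b-a)/(2\pi)\rceil+1$ is slightly generous (one fewer interval would also do), but this is irrelevant to the conclusion.

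As for comparison with the paper: the paper does not actually give a proof of this lemma at all. It simply states ``For a proof of the above lemma, see \cite{P}'' and moves on. So your proposal in fact supplies what the paper omits. The argument in Passaseo's paper \cite{P} is along the same lines (the zero set of a nontrivial trigonometric polynomial of bounded degree is finite with a bound depending only on the degree and the length of the interval), so there is no genuinely different route to compare against.
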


For a proof of the above lemma, see \cite{P}.

\begin{lemma}\label{stima}
Let $\Omega$ be a bounded domain of $\mathbb{R}^n$ and $W$ be a function 
satisfying \eqref{W}.
Then, for every $k\in \mathbb N$ there exists a positive constant $c_k$ such that
\begin{equation}
\max F_\epsilon(f)\le c_k\quad \forall f\in S_\epsilon^k.
\end{equation}
\end{lemma}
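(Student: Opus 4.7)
The plan is to exploit three facts simultaneously: (a) the preceding lemma supplies a uniform bound $N_k := \sup\{\chi(\varphi_\lambda) : |\lambda|_{\mathbb{R}^{k+1}}=1\}<\infty$ on the number of zeros of $\varphi_\lambda$ in any fixed bounded interval $I \supset P_1(\overline{\Omega})$; (b) $g := L_\epsilon(\varphi_\lambda)$ takes values in $[-1,1]$, is identically $\pm 1$ outside the $N\le N_k$ \emph{transition strips} $(t_i-\epsilon,t_i+\epsilon)$ around the zeros of $\varphi_\lambda$, and is Lipschitz with $|g'|\le 1/\epsilon$ on each strip; (c) the function $f := g\circ P_1$ depends only on $x_1$, which will let me reduce the $n$-dimensional Gagliardo seminorm to a one-dimensional one.

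The potential term is easy: since $W(\pm 1)=0$, $W(f)$ is supported in the union of the slabs $\Omega\cap\{|x_1-t_i|<\epsilon\}$, whose total measure is $O(\epsilon N_k)$. Hence $\int_\Omega W(f)\,dx = O(\epsilon)$, and the prefactors $\epsilon^{-2s}$, $|\epsilon\log\epsilon|^{-1}$, $\epsilon^{-1}$ in \eqref{funz2}--\eqref{funz} yield $O(\epsilon^{1-2s})$, $O(1/|\log\epsilon|)$, $O(1)$ respectively, which are uniformly bounded as $\epsilon\to 0^+$.

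For the Gagliardo seminorm I would first perform a dimensional reduction. Embedding $\Omega$ in a product cube $I\times Q'\subset\mathbb{R}\times\mathbb{R}^{n-1}$, the change of variables $z=y'-x'$ in the transverse variables and the bound
\[
\int_{\mathbb{R}^{n-1}} \frac{dz}{(|u-v|^2+|z|^2)^{(n+2s)/2}} = C_{n,s}\,|u-v|^{-(1+2s)}
\]
(obtained by rescaling $z\mapsto |u-v|w$) yield
\[
\int_\Omega\!\int_\Omega \frac{|f(x)-f(y)|^2}{|x-y|^{n+2s}}\,dx\,dy \;\le\; C(n,s,\Omega)\int_I\!\int_I \frac{|g(u)-g(v)|^2}{|u-v|^{1+2s}}\,du\,dv =: C\,[g]_{H^s(I)}^2 .
\]

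Finally, following the scheme mentioned in the introduction, I would estimate $[g]_{H^s(I)}^2$ by partitioning $I$ into bulk blocks $B_j$ (on which $g\equiv\pm 1$) and transition strips $T_i$ and splitting the double integral into the \emph{three interaction types}: transition-transition, bulk-transition, and bulk-bulk of opposite sign. The Lipschitz bound $|g(u)-g(v)|\le|u-v|/\epsilon$ on a single $T_i$ shows that the transition-transition piece contributes $O(\epsilon^{1-2s})$; a direct calculation of $\int_\epsilon^R\!\int_{-R}^{-\epsilon}(y-x)^{-(1+2s)}\,dx\,dy$ shows that the opposite-sign bulk-bulk piece across one strip is of order $1$, $|\log\epsilon|$, or $\epsilon^{1-2s}$ according as $s<1/2$, $s=1/2$, or $s>1/2$; the bulk-transition terms are dominated by these two. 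Summing over the (uniformly bounded) number of strips and multiplying by the prefactors $1$, $|\log\epsilon|^{-1}$, $\epsilon^{2s-1}/2$ in $F_\epsilon$ yields a bound $c_k$ independent of $\epsilon$ and $\lambda$. The main obstacle, and the reason this is stated as a separate lemma, is the careful case-by-case 1D integration in this last step: the three singular regimes each require a slightly different computation, and the matching with the normalizations in \eqref{funz2}--\eqref{funz} is precisely the scaling that makes $F_\epsilon$ $\Gamma$-convergent in the sense of \cite{SV}.
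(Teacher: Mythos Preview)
Your argument is correct and follows the same three-way decomposition (bulk--bulk, bulk--transition, transition--transition) as the paper, with the same two inputs: the uniform bound on the number of zeros of $\varphi_\lambda$ and the fact that $L_\epsilon(\varphi_\lambda)$ is $1/\epsilon$-Lipschitz and equals $\pm 1$ away from the $\epsilon$-neighbourhoods of those zeros.

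The one organisational difference worth recording is your dimensional reduction. You first integrate out the transverse variables via
\[
\int_{\mathbb{R}^{n-1}}\frac{dz}{(|u-v|^2+|z|^2)^{(n+2s)/2}}=C_{n,s}\,|u-v|^{-(1+2s)},
\]
so that the whole estimate becomes a one-dimensional $H^s$ seminorm computation for $g=L_\epsilon(\varphi_\lambda)$ on a fixed interval. The paper instead stays in $\mathbb{R}^n$ throughout: in its Case~(b) it uses the pointwise bound $|u_{\lambda,\epsilon}(x)-u_{\lambda,\epsilon}(y)|^2\le\min\{|x-y|^2/\epsilon^2,4\}$ and integrates radially, and in its Case~(a) it models the interaction of two bulk slabs by slicing one of them into $O(1/\epsilon)$ strips of width $\epsilon$. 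Both routes produce the same orders $\epsilon^{1-2s}$ (resp.\ $|\log\epsilon|$ at $s=1/2$) that cancel against the normalisations in \eqref{funz2}--\eqref{funz}; your reduction simply makes the bookkeeping shorter and the $s$-dependent case split more transparent. One small point: your claim that the bulk--transition terms are ``dominated by these two'' deserves the one-line justification that $|g(u)-g(v)|^2\le\min\{|u-v|^2/\epsilon^2,4\}$ globally, which is exactly what the paper uses in its Case~(b) to get the $O(\epsilon^{1-2s})$ bound.
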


\begin{proof}
Let $u_{\lambda, \epsilon}=L_\epsilon(\varphi_\lambda) \circ P_1 \in S_\epsilon^k$ 
and set
\begin{gather*}
a=\inf P_1(\Omega),\quad 
b=\sup P_1(\Omega),\\
Z_{\lambda} =\{t \in[a,b]: \varphi_\lambda(t)=0\},\\
Z_{\lambda,\epsilon} =\{t \in\mathbb{R}: \text{dist}(t,Z_\lambda)<\epsilon\}.
\end{gather*}
Note that
\begin{enumerate}
\item[(i)] If $P_1(x)\notin Z_{\lambda,\epsilon}$, then 
$|u_{\lambda, \epsilon}(x)|=1$ and $Du_{\lambda,\epsilon}(x)=0$, while

\item[(ii)] if $P_1(x)\in Z_{\lambda,\epsilon}$, then $|u_{\lambda, \epsilon}(x)|
\le 1$ and $|Du_{\lambda,\epsilon}(x)|\le \frac{1}{\epsilon}$.
\end{enumerate}
Since $\Omega$ is bounded, we can suppose it is included in a cube $Q$ 
of side large enough.
We will denote with $Y_{\lambda,\epsilon}=Z_{\lambda,\epsilon}^C$ 
the complement to $Z_{\lambda,\epsilon}$, then we have to distinguish 
three cases:
\begin{itemize}
\item[(a)] if $x\in Y_{\lambda,\epsilon}$ and $y\in Y_{\lambda,\epsilon}$;
\item[(b)] if $x\in Z_{\lambda,\epsilon}$ and $y\in Y_{\lambda,\epsilon}$;
\item[(c)] if $x\in Z_{\lambda,\epsilon}$ and $y\in Z_{\lambda,\epsilon}$.
\end{itemize}
We set $k=\max \{\chi(\varphi_\lambda): \lambda \in \mathbb{R}^{k+1}, 
|\lambda|_{\mathbb{R}^{k+1}}=1\}$, then
\[
Z_{\lambda,\epsilon}=\sum_{i=1}^k{Z_{\lambda,\epsilon}^i}\quad \text{and}\quad 
Y_{\lambda,\epsilon}=\sum_{i=1}^k{Y_{\lambda,\epsilon}^i}.
\]
Now we call $\check{Z}_{\lambda, \epsilon}=P_1^{-1}(Z_{\lambda,\epsilon})\cap \Omega$,
 $\check{Y}_{\lambda, \epsilon}=P_1^{-1}(Y_{\lambda,\epsilon})\cap \Omega$
and we observe that
\begin{equation}
\int_{\check{Y}_{\lambda,\epsilon}}{W(u_{\lambda, \epsilon})\,dx}=0,
\end{equation}
while if we set $\rho=\sup \{|x|:x\in \Omega\}$, $M=\max \{W(t):|t|\le 1\}$ 
and denote by $\omega_{n-1}$ the $(n-1)$-dimensional measure of the unit 
sphere of $\mathbb{R}^{n-1}$, it results
\begin{equation}
\int_{\check{Z}_{\lambda, \epsilon}}{W(u_{\lambda, \epsilon})\,dx}\le M|\check{Z}_{\lambda, \epsilon}|\le 2\epsilon M\omega_{n-1}\rho^{n-1}.
\end{equation}

At this point it remains to analyze 
$ \int_\Omega{ \int_\Omega{\frac{|u_{\lambda,\epsilon}(x)-u_{\lambda,\epsilon}(y)|^2}{|x-y|^{n+2s}}\,dx}dy}$.
We split it in three cases:
\smallskip

\noindent\textbf{Case (a).} We have
\begin{equation}\label{PrimaStima}
\int_{\check{Y}_{\lambda,\epsilon}}{\int_{\check{Y}_{\lambda,\epsilon}}
{\frac{|u_{\lambda,\epsilon}(x)-u_{\lambda,\epsilon}(y)|^2}{|x-y|^{n+2s}}\,dx}dy}
=\sum_{\substack{i,j=1\\ i\neq j}}^k{\int_{\check{Y}_{\lambda,\epsilon}^i}
\int_{\check{Y}_{\lambda,\epsilon}^j}
\frac{|u_{\lambda,\epsilon}(x)-u_{\lambda,\epsilon}(y)|^2}{|x-y|^{n+2s}}\,dx\,dy}
\end{equation}
We denote $Q_{-}=Q\cap P_1^{-1}(\{x_1<0\})$, 
$Q_+=Q\cap P_1^{-1}(\{y_1>2\epsilon\})$ and we split $Q_-$ in $N$ strips of 
width $\epsilon$, with $N$ of order $1/\epsilon$, so we obtain
\begin{equation}\label{primastima}
\begin{aligned}
&\sum_{\substack{i,j=1\\ i\neq j}}^k{\int_{\check{Y}_{\lambda,\epsilon}^i}
\int_{\check{Y}_{\lambda,\epsilon}^j}
\frac{|u_{\lambda,\epsilon}(x)-u_{\lambda,\epsilon}(y)|^2}{|x-y|^{n+2s}}\,dx\,dy}\\
&\le k^2\int_{Q_-}\int_{Q_+}\frac{4}{|x-y|^{n+2s}}\, dx\, dy \\
&\le 4Nk^2 \int_{-\epsilon}^{-2\epsilon} \int_{-2x_1}^{+\infty}r^{-2s-1}\, dr\, dx_1\\
&=\frac{2}{s} N k^2 \int_{-\epsilon}^{-2\epsilon} (-2x_1)^{-2s}\, dx_1.
\end{aligned}
\end{equation}
Now we distinguish two cases:
\begin{itemize}
\item[(j)] if $s\neq 1/2$, we have
\begin{equation}\label{primasdiverso}
\frac{2}{s} N k^2 \int_{-\epsilon}^{-2\epsilon} (-2x_1)^{-2s}\, dx_1=\frac{2^{1-2s}Nk^2}{s(1-2s)}\epsilon^{1-2s}(2^{1-2s}-1);
\end{equation}

\item[(jj)] while, if $s=1/2$,
\begin{equation}\label{primasmezzo}
\frac{2}{s} N k^2 \int_{-\epsilon}^{-2\epsilon} (-2x_1)^{-2s}\, dx_1=\frac{2^{1-2s}}{s} N k^2\log 2.
\end{equation}
\end{itemize}
\smallskip

\noindent\textbf{Case (b).}
We note that $\check{Y}_{\lambda,\epsilon}^i\subseteq Q\setminus 
\check{Z}_{\lambda,\epsilon}^i$, so
\begin{equation}\label{secondastima}
\begin{split}
&\int_{\check{Z}_{\lambda,\epsilon}}
\int_{\check{Y}_{\lambda,\epsilon}}
\frac{|u_{\lambda,\epsilon}(x)-u_{\lambda,\epsilon}(y)|^2}
	{|x-y|^{n+2s}} \, dxdy \\
&\le \sum_{i=1}^k{\int_{\check{Z}_{\lambda,\epsilon}^i}{\int_{Q\setminus\check{Z}_{\lambda,\epsilon}^i}{\frac{|u_{\lambda,\epsilon}(x)-u_{\lambda,\epsilon}(y)|^2}{|x-y|^{n+2s}}\,dx}dy}}\\
&\le 2\omega_{n-1}\rho^{n-1}\epsilon \sum_{i=1}^k{\sup_{x\in \check{Z}_{\lambda,\epsilon}^i}\int_{Q\setminus \check{Z}_{\lambda,\epsilon}^i}{\frac{\min \{1/\epsilon^2|x-y|^2,4\}}{|x-y|^{n+2s}}\,dy}}\\
&\le2k\epsilon \omega_{n-1}\rho^{n-1} \Big(\int_0^{2\epsilon}{\frac{1}{\epsilon^2}r^{1-2s}\,dr}+\int_{2\epsilon}^{+\infty}{4r^{-1-2s}\,dr}\Big)\\
&=k\Big(\frac{2}{\epsilon}\frac{r^{2-2s}}{2-2s}\Big|_0^{2\epsilon}+8\epsilon \frac{r^{-2s}}{-2s}\Big|_{2\epsilon}^{+\infty}\Big)\omega_{n-1}\rho^{n-1} \\
&=k\epsilon^{1-2s}\Big(\frac{2^{2-2s}}{1-s}+\frac{2^{2-2s}}{s}\Big)\omega_{n-1}\rho^{n-1}.
\end{split}
\end{equation}
\smallskip

\noindent\textbf{Case (c).} It results
\begin{equation}
\begin{aligned}
&\int_{\check{Z}_{\lambda,\epsilon}}{\int_{\check{Z}_{\lambda,\epsilon}}
 {\frac{|u_{\lambda,\epsilon}(x)-u_{\lambda,\epsilon}(y)|^2}{|x-y|^{n+2s}}\,dx}dy}\\
&=\sum_{i=1}^k{\int_{\check{Z}_{\lambda,\epsilon}^i}
 {\int_{\check{Z}_{\lambda,\epsilon}^i}{\frac{|u_{\lambda,\epsilon}(x)
 -u_{\lambda,\epsilon}(y)|^2}{|x-y|^{n+2s}}\,dx}dy}}\\
&\quad + \sum_{\substack{i,j=1\\ i\neq j}}^k{\int_{\check{Z}_{\lambda,\epsilon}^j}
{\int_{\check{Z}_{\lambda,\epsilon}^i}{\frac{|u_{\lambda,\epsilon}(x)
-u_{\lambda,\epsilon}(y)|^2}{|x-y|^{n+2s}}\,dx}dy}}.
\end{aligned}
\end{equation}
Concerning the first term of the right-hand side, we have
\begin{equation}\label{terzastima}
\begin{aligned}
&\sum_{i=1}^k{\int_{\check{Z}_{\lambda,\epsilon}^i}
 {\int_{\check{Z}_{\lambda,\epsilon}^i}{\frac{|u_{\lambda,\epsilon}(x)
 -u_{\lambda,\epsilon}(y)|^2}{|x-y|^{n+2s}}\,dx}dy}}\\
&\le \frac{1}{\epsilon^2}\sum_{i=1}^k{|\check{Z}_{\lambda,\epsilon}^i
 |\int_0^{2\epsilon}{r^{1-2s}\,dr}}\le k\omega_{n-1}\rho^{n-1}\frac{2^{2-2s}}{1-s}
 \epsilon^{1-2s}.
\end{aligned}
\end{equation}
The other term is estimated as in Case (b).

So we can obtain the estimates for the functionals $F_\epsilon$. 
In fact, by \eqref{primastima}, \eqref{primasdiverso}, \eqref{primasmezzo}, 
\eqref{secondastima} and \eqref{terzastima}, if $s\in(0,1/2)$ we have
\begin{equation}
\begin{aligned}
F_\epsilon(u_{\lambda,\epsilon})
&=\int_{\Omega}{\int_{\Omega}{\frac{|u_{\lambda,\epsilon}(x)-u_{\lambda,\epsilon}(y)|^2}{|x-y|^{n+2s}}\,dx\,dy}}+\frac{1}{\epsilon^{2s}}\int_{\Omega}{W(u_{\lambda,\epsilon})\,dx}\\
&\le 2k\omega_{n-1}\rho^{n-1}\Big(\frac{2^{2-2s}}{1-s}\epsilon^{1-2s}+\frac{2^{2-2s}}{s}\epsilon^{1-2s}\Big)\\
&\quad +\epsilon^{1-2s}\frac{2^{2-2s}}{1-s}k\omega_{n-1}\rho^{n-1}+\frac{kM}{\epsilon^{2s}}2\epsilon \omega_{n-1}\rho^{n-1}\\
&\quad +\frac{2^{1-2s}Nk^2}{s(1-2s)}\epsilon^{1-2s}(2^{1-2s}-1)\\
&\le k\Big (\frac{2^{3-2s}}{1-s}+\frac{2^{3-2s}}{s}+\frac{2^{2-2s}}{1-s}+2M\Big)\omega_{n-1}\rho^{n-1}\\
&\quad +\frac{2^{1-2s}Nk^2}{s(1-2s)}(2^{1-2s}-1);
\end{aligned}
\end{equation}
if $s=1/2$ we have
\begin{equation}
\begin{aligned}
F_\epsilon(u_\lambda,\epsilon)
&=\frac{1}{|\log \epsilon|}\int_{\Omega}{\int_{\Omega}
{\frac{|u_{\lambda,\epsilon}(x)-u_{\lambda,\epsilon}(y)|^2}{|x-y|^{n+1}}\,dx\,dy}}
+\frac{1}{|\epsilon \log \epsilon|}\int_{\Omega}{W(u_{\lambda,\epsilon})\,dx} \\
&\le \Bigl(\frac{20k}{|\log \epsilon|}+\frac{2kM}{|\log 
\epsilon|}\Bigr)\omega_{n-1}\rho^{n-1}+\frac{1}{s|\log \epsilon|}N k^2\log 2\\
&\le k(20+2M)\omega_{n-1}\rho^{n-1}+\frac{1}{s} N k^2\log 2;
\end{aligned}
\end{equation}
and, if $s\in(1/2,1)$ we get
\begin{equation}
\begin{aligned}
F_\epsilon(u_\lambda,\epsilon)
&=\frac{{\epsilon}^{2s-1}}{2}\int_{\Omega}{\int_{\Omega}
 {\frac{|u_{\lambda,\epsilon}(x)-u_{\lambda,\epsilon}(y)|^2}{|x-y|^{n+2s}}\,dx\,dy}}
 +\frac{1}{\epsilon}\int_{\Omega}{W(u_{\lambda,\epsilon})\,dx}\\
&\le k\Big(\frac{2^{2-2s}}{1-s}+\frac{2^{2-2s}}{s}
 +\frac{2^{1-2s}}{1-s}+2M\Big)\omega_{n-1}\rho^{n-1}\\
&\quad +\frac{2^{-2s}Nk^2}{s(1-2s)}(2^{1-2s}-1).
\end{aligned}
\end{equation}
\end{proof}

Now we show a technical lemma, that we will used for proving our main result.

\begin{lemma}\label{lemma4punti}
For every $\epsilon>0$ and $k\in \mathbb N$ the set $S_\epsilon^k$ verifies the following properties:
\begin{itemize}
\item[(a)] $S_\epsilon^k$ is a compact subset of $H^s(\Omega)$;
\item[(b)] $S_\epsilon^k=-S_\epsilon^k$;
\item[(c)] for all $k\in \mathbb{N}$ there exists $\bar{\epsilon}_k>0$ 
 such that $0\notin S_\epsilon^k \quad\forall \epsilon \in (0, \bar{\epsilon}_k)$;
\item[(d)] for all $k\in \mathbb{N}$ and $\forall\,\epsilon>0$ such that 
$0\notin S_\epsilon^k$, it results gen$(S_\epsilon^k)\ge k+1$.
\end{itemize}
\end{lemma}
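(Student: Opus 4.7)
The plan is to treat all four items through the single map $\Phi_\epsilon\colon S^k\to H^s(\Omega)$ given by $\Phi_\epsilon(\lambda):=L_\epsilon(\varphi_\lambda)\circ P_1$, where $S^k=\{\lambda\in\mathbb{R}^{k+1}:|\lambda|_{\mathbb{R}^{k+1}}=1\}$; by construction $S_\epsilon^k=\Phi_\epsilon(S^k)$. Property (b) is immediate, since $\varphi_{-\lambda}=-\varphi_\lambda$ forces $\varphi_{-\lambda}/|\varphi_{-\lambda}|=-\varphi_\lambda/|\varphi_\lambda|$, so $\Phi_\epsilon$ is odd.

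For (a), I would first observe that each $L_\epsilon(\varphi_\lambda)$ is the average of a $\pm 1$-valued function over an interval of length $2\epsilon$, so $\|L_\epsilon(\varphi_\lambda)\|_\infty\le 1$ and its Lipschitz constant is at most $1/\epsilon$, uniformly in $\lambda$. Hence $\Phi_\epsilon(S^k)$ is uniformly bounded in $W^{1,\infty}(\Omega)$, and therefore in $H^s(\Omega)$ via Proposition \ref{Contemb}. For continuity, if $\lambda_n\to\lambda_\star$ in $S^k$ then $\varphi_{\lambda_n}\to\varphi_{\lambda_\star}$ uniformly on every bounded interval; since $\varphi_{\lambda_\star}$ has only isolated zeros, the one-dimensional measure of the set where $\operatorname{sign}\varphi_{\lambda_n}\neq\operatorname{sign}\varphi_{\lambda_\star}$ tends to $0$, from which one gets $L_\epsilon(\varphi_{\lambda_n})\to L_\epsilon(\varphi_{\lambda_\star})$ uniformly on $[a,b]$. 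Combining this $L^\infty$-convergence with the uniform Lipschitz bound, I would pass to the Gagliardo double integral by splitting at a scale $\sim\epsilon$ (Lipschitz estimate near the diagonal, $L^\infty$-estimate away from it) and applying dominated convergence. Thus $\Phi_\epsilon$ is continuous and $S_\epsilon^k=\Phi_\epsilon(S^k)$ is compact in $H^s(\Omega)$.

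For (c), set $N_k:=\sup\{\chi(\varphi_\lambda):|\lambda|_{\mathbb{R}^{k+1}}=1\}$, which is finite by the preceding lemma, and write $a:=\inf P_1(\Omega)$, $b:=\sup P_1(\Omega)$. Since $Z_{\lambda,\epsilon}\cap[a,b]$ has one-dimensional measure at most $2N_k\epsilon$, for $\epsilon$ below a threshold $\bar\epsilon_k$ depending only on $N_k$ and $\Omega$, the set $P_1^{-1}([a,b]\setminus Z_{\lambda,\epsilon})\cap\Omega$ has positive $n$-dimensional measure, and on it $\Phi_\epsilon(\lambda)=\pm 1$; hence $\Phi_\epsilon(\lambda)\not\equiv 0$. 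For (d), with $\epsilon\in(0,\bar\epsilon_k)$ the map $\Phi_\epsilon\colon S^k\to S_\epsilon^k\subset H^s(\Omega)\setminus\{0\}$ is continuous, odd and onto; if $\operatorname{gen}(S_\epsilon^k)\le k$, then by Definition \ref{genus} there would exist an odd continuous $\psi\colon H^s(\Omega)\to\mathbb{R}^k$ with $\psi\neq 0$ on $S_\epsilon^k$, and $\psi\circ\Phi_\epsilon\colon S^k\to\mathbb{R}^k\setminus\{0\}$ would be an odd continuous map, contradicting Borsuk--Ulam.

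The main technical obstacle I foresee is the $H^s$-continuity of $\Phi_\epsilon$ in step (a): although pointwise (even uniform) convergence of $L_\epsilon(\varphi_{\lambda_n})$ follows cleanly from the finiteness of the zero set, upgrading this to convergence of the Gagliardo seminorm requires a careful diagonal-splitting argument, since the Lipschitz bound $1/\epsilon$ is not by itself integrable against $|x-y|^{-n-2s}$ near the diagonal and must be paired with the uniform $L^\infty$-bound at the correct scale.
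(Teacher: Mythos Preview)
Your argument is correct, and for items (b), (c), (d) it is essentially what Passaseo does in \cite{P}; the paper simply cites that reference for those three points. The only substantive difference is in (a). The paper does not redo the continuity argument in $H^s(\Omega)$: it invokes \cite[Lemma~2.8]{P}, which already shows that $S_\epsilon^k$ is a compact subset of $H^1(\Omega)$, and then observes that the continuous embedding $H^1(\Omega)\hookrightarrow H^s(\Omega)$ of Proposition~\ref{Contemb} carries compact sets to compact sets. Your route is instead self-contained: you establish continuity of $\Phi_\epsilon$ directly into $H^s$ by a dominated-convergence argument on the Gagliardo double integral. That works, and in fact your worry in the last paragraph is a bit overstated: the single dominating function
\[
g(x,y)=\frac{\min\{\epsilon^{-2}|x-y|^{2},\,4\}}{|x-y|^{n+2s}}
\]
is integrable on $\Omega\times\Omega$ for every $s\in(0,1)$ (the Lipschitz estimate gives the exponent $2-n-2s>-n$ near the diagonal, and the $L^\infty$ bound handles the rest), so dominated convergence applies without further splitting. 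The paper's shortcut is shorter because it reuses the $H^1$ result already available; your approach has the advantage of not leaving the fractional setting and of making the compactness mechanism explicit.
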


\begin{proof}
The points (b), (c) and (d) are proved in \cite{P}. 
For (a) we use \cite[Lemma 2.8]{P} and the continuous embedding of 
$H^1(\Omega)$ in $H^s(\Omega)$ for all  $s\in(0,1)$, see Proposition \ref{Contemb}.
\end{proof}

Before proving the main theorem of this work, we point out a useful property
 of $F_\epsilon$.

\begin{lemma}\label{PScond}
The functionals \eqref{funz2}, \eqref{funz1}, \eqref{funz} satisfy the 
(PS)-condition.
\end{lemma}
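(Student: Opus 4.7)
The plan is to verify the (PS)-condition by the standard three-step procedure, adapted to the fractional Gagliardo seminorm. Let $(u_i)_i \subset H^s(\Omega)$ satisfy $F_\epsilon(u_i)\le C$ and $F_\epsilon'(u_i)\to 0$ in $H^s(\Omega)^*$. Each of the three functionals \eqref{funz2}, \eqref{funz1}, \eqref{funz} has the form $\alpha_\epsilon[u]_{H^s(\Omega)}^2+\beta_\epsilon\int_\Omega W(u)\,dx$ with $\alpha_\epsilon,\beta_\epsilon>0$ fixed once $\epsilon$ is fixed, so the energy bound immediately yields
\[
[u_i]_{H^s(\Omega)}^2\le C_\epsilon, \qquad \int_\Omega W(u_i)\,dx\le C_\epsilon.
\]
Because $W$ is a genuine double-well potential (coercive at infinity, $W(t)\ge ct^2-c'$ for $|t|$ large), the second bound controls $\|u_i\|_{L^2(\Omega)}$, so $(u_i)_i$ is bounded in $H^s(\Omega)$. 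By reflexivity, up to a subsequence, $u_i\rightharpoonup u$ in $H^s(\Omega)$, and since $\Omega$ is a smooth bounded extension domain, Theorem \ref{Compemb} gives $u_i\to u$ strongly in $L^q(\Omega)$ for admissible $q$, and a.e.

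To promote weak to strong convergence in $H^s(\Omega)$, I would test $F_\epsilon'(u_i)$ against $u_i-u$, which is bounded in $H^s(\Omega)$; hence $F_\epsilon'(u_i)[u_i-u]\to 0$. Explicitly,
\begin{align*}
F_\epsilon'(u_i)[u_i-u]&=2\alpha_\epsilon\iint_{\Omega\times\Omega}\frac{(u_i(x)-u_i(y))\bigl((u_i-u)(x)-(u_i-u)(y)\bigr)}{|x-y|^{n+2s}}\,dx\,dy\\
&\quad+\beta_\epsilon\int_\Omega W'(u_i)(u_i-u)\,dx.
\end{align*}
The quadratic part decomposes as $[u_i-u]_{H^s(\Omega)}^2$ plus a cross-term involving $u$; the latter vanishes in the limit since $v\mapsto\iint\frac{(u(x)-u(y))(v(x)-v(y))}{|x-y|^{n+2s}}\,dx\,dy$ is a bounded linear functional on $H^s(\Omega)$ and $u_i-u\rightharpoonup 0$. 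The nonlinear term $\int_\Omega W'(u_i)(u_i-u)\,dx$ tends to zero by a.e.\ convergence $W'(u_i)\to W'(u)$, the growth of $W'$, the strong convergence of $u_i$ in $L^q$ for suitable $q<2^*_s$, and Vitali's theorem (or Nemytskii continuity combined with H\"older's inequality). Consequently $[u_i-u]_{H^s(\Omega)}\to 0$, and combined with $L^2$-strong convergence this yields $u_i\to u$ in $H^s(\Omega)$.

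The main delicate point is the last one: controlling $\int_\Omega W'(u_i)(u_i-u)\,dx$ using only the regularity of the double-well $W$. This reduces to an equi-integrability/Nemytskii-continuity statement that requires mild growth on $W'$ at infinity; for the standard choice $W(t)=(1-t^2)^2/4$ this is automatic, but if one wishes to rely only on \eqref{W} one must either invoke the coercivity implicit in the word ``double-well'' or first truncate the sequence and use the sign of $W'$ outside the compact interval $[-1,1]$ to obtain an a priori $L^\infty$ bound, after which the nonlinear term is handled trivially by dominated convergence.
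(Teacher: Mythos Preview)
Your argument is correct and follows essentially the same route as the paper: bound $(u_i)$ in $H^s(\Omega)$ via the coercivity of $W$, extract a weak/compact subsequence, and then test the derivative against $u_i-u$ to upgrade to strong convergence of the Gagliardo seminorm. The only cosmetic difference is that the paper first verifies that the weak limit $u$ is itself a critical point and then considers $F_\epsilon'(u_n)(u_n-u)-F_\epsilon'(u)(u_n-u)$, whereas you decompose $F_\epsilon'(u_i)[u_i-u]$ directly and kill the cross term by weak convergence; your discussion of the growth hypotheses on $W'$ needed to handle the nonlinear term is in fact more careful than the paper's.
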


\begin{proof}
We will prove the lemma for $s\in(1/2,1)$ being the other cases analogue.
If $W$ is quadratic, in particular there exist $\alpha$, $\beta>0$ such that
\begin{equation}\label{Wquadr}
W(u)\ge \alpha u+\beta \quad \forall u\in \mathbb{R}.
\end{equation}
Since $(F_\epsilon(u_n))_n$ is bounded, \eqref{Wquadr} implies that
 $\|u_n\|_{H^s(\Omega)}$ is bounded, hence $u_n\rightharpoonup u$ in
 $H^s(\Omega)$, $u_n\to u$ in $L^q$, $\forall q\in [1, 2^*=\frac{2n}{n-2s})$ 
from Theorem \ref{Compemb}, therefore $u_n\to u$ a.e. $x\in \Omega$.

We claim that $u$ is a critical point of $F_\epsilon$. 
In fact for all $v\in H^s(\Omega)$,
\begin{equation}
\begin{aligned}
F_\epsilon'(u)(v)
&=\epsilon^{2s-1}\int_{\Omega}{\int_{\Omega}{\frac{u(x)-u(y)}{|x-y|^{n+2s}}(v(x)
 -v(y))\,dx\,dy}}\\
&\quad +\frac{1}{\epsilon}\int_{\Omega}{W'(u)v\,dx}\\
&=\epsilon^{2s-1}\lim_{n\to \infty}{\int_{\Omega}{\int_{\Omega}
{\frac{u_n(x)-u_n(y)}{|x-y|^{n+2s}}(v(x)-v(y))\,dx\,dy}}}\\
&\quad +\frac{1}{\epsilon}\lim_{n\to \infty}{\int_{\Omega}{W'(u_n)v\,dx}}
\end{aligned}
\end{equation}
since $u_n\rightharpoonup u$ in $H^s(\Omega)$, $u_n \to u$ in $L^2(\Omega)$ 
and, by hypothesis, $F_\epsilon'(u_n)\to 0$.

This implies that $F_\epsilon'(u_n)(u_n-u)+F_\epsilon'(u)(u_n-u)\to 0$,
 but on the other hand
\begin{equation}\begin{aligned}
&F_\epsilon'(u_n)(u_n-u)+F_\epsilon'(u)(u_n-u) \\
&=\epsilon^{2s-1}\int_{\Omega}{\int_{\Omega}{\frac{u_n(x)-u_n(y)}
 {|x-y|^{n+2s}}(u_n(x)-u(x)-u_n(y)+u(y))\,dx\,dy}}\\
&\quad -\epsilon^{2s-1}\int_{\Omega}{\int_{\Omega}{\frac{u(x)-u(y)}
 {|x-y|^{n+2s}}(u_n(x)-u(x)-u_n(y)+u(y))\,dx\,dy}}\\
&\quad +\frac{1}{\epsilon}\int_{\Omega}{[W'(u_n)-W'(u)(u_n-u)]\,dx},
\end{aligned}
\end{equation}
and the second term on the right hand side appraoches $0$.
In particular we obtain
\[
\int_{\Omega}{\int_{\Omega}{\frac{|u_n(x)-u_n(y)|^2}{|x-y|^{n+2s}}\,dx\,dy}}
\to \int_{\Omega}{\int_{\Omega}{\frac{|u(x)-u(y)|^2}{|x-y|^{n+2s}}\,dx\,dy}}.
\]
Hence $\|u_n\|_{H^s(\Omega)}\to \|u\|_{H^s(\Omega)}$ and since $u_n\rightharpoonup u$ 
in $H^s(\Omega)$, we have the result.
\end{proof}

We are now able to prove our main result.

\begin{proof}[Proof of Theorem \ref{Tfond}]
As usual we prove the theorem only for $s\in(1/2,1)$.
Consider $\overline{W}\in C^2(\mathbb{R};\mathbb{R^+})$ another even function,
 which satisfies the following properties:
\[
\overline{W}=W\; \forall t\in[-1,1];\quad t\overline{W}'(t)>0 \text{ for } |t|>1,
\]
and the asymptotic behaviour guaranteeing that
\[
\overline{F}_\epsilon(u)=\frac{{\epsilon}^{2s-1}}{2}
\int_{\Omega}{\int_{\Omega}{\frac{|u(x)-u(y)|^2}{|x-y|^{n+2s}}\,dx\,dy}}
+\frac{1}{\epsilon}\int_{\Omega}{\overline{W}(u)\,dx}
\]
is a $C^2$-functional satisfying the (PS)-condition.

We prove now that for every critical point $\overline{u}\in H^s(\Omega)$ 
which is a critical point for the functional $\overline{F}_\epsilon$, 
it results $|\overline{u}(x)|\le 1$ for all $x\in \Omega$, and so 
$\overline{u}$ is a critical point for the functional $F_\epsilon$ too: 
indeed we have that for all $v\in H^s(\Omega)$,
\[
\epsilon^{2s-1}\int_{\Omega}{\int_{\Omega}{\frac{\overline{u}(x)
-\overline{u}(y)}{|x-y|^{n+2s}}(v(x)-v(y))dx dy}}
+\frac{1}{\epsilon}\int_{\Omega}{\overline{W}'(\overline{u})v\,dx}=0.
\]
In particular, if we set $\hat{u}=\max \{\min \{\overline{u},1\},-1\}$, 
by choosing $v=\overline{u}-\hat{u}$,
\begin{align*}
&\epsilon^{2s-1}\int_{\Omega}{\int_{\Omega}{\frac{\overline{u}(x)
-\overline{u}(y)}{|x-y|^{n+2s}}(\overline{u}(x)-\hat{u}(x)
-\overline{u}(y)+\hat{u}(y))\,dx\,dy}}\\
& +\frac{1}{\epsilon}
\int_{\Omega}{\overline{W}'(\overline{u})(\overline{u}-\hat{u})\,dx}=0
\end{align*}
with
\begin{equation}
\begin{aligned}
&\int_{\Omega}{\int_{\Omega}{\frac{\overline{u}(x)
 -\overline{u}(y)}{|x-y|^{n+2s}}(\overline{u}(x)-\hat{u}(x)
 -\overline{u}(y)+\hat{u}(y))\,dx\,dy}}\\
& =\int_{\Omega}{\int_{\Omega}{\frac{|\overline{u}(x)
 -\overline{u}(y)|^2}{|x-y|^{n+2s}}\,dx\,dy}}\ge 0
\end{aligned}
\end{equation}
and
\[
\int_{\Omega}{\overline{W}'(\overline{u})(\overline{u}-\hat{u})\,dx}>0\quad 
\text{if } \overline{u}-\hat{u} \not\equiv 0\quad \text{ in } \Omega
\]
since $t\overline{W}'(t)>0$ for $|t|>1$. It follows that $\overline{u}=\hat{u}$,
 i.e., $|\overline{u}(x)|\le 1$ for almost every $x\in \Omega$.

Let $\epsilon_k>0$ be such that $\epsilon_k<\frac{1}{c_k}W(0)|\Omega|$,
 where $c_k$ is the constant introduced in Lemma \ref{stima}. 
Then, for every $\epsilon \in (0,\epsilon_k )$ we can apply Theorem \ref{THilbert} 
to the functional $\overline{F}_\epsilon$ with $\overline{c}_1<0$ and $c_2=c_k$, 
because $\overline{F}_\epsilon(0)=\frac{1}{\epsilon}W(0)|\Omega|>c_k$  for all 
$\epsilon \in (0,\epsilon_k )$. In this way we can prove that for every 
$\epsilon \in (0,\epsilon_k )$, $\overline{F}_\epsilon$ has at least $(k+1)$ 
pairs $(-u_{0,\epsilon},u_{0, \epsilon}),\dots, (-u_{k,\epsilon},u_{k, \epsilon})$ 
of critical points with $\overline{F}_\epsilon(u_{i,\epsilon})\le c_k$  for all 
$i=0,1,\dots, k$.
In fact gen$(\overline{F}_\epsilon^{\overline{c}_1})=\operatorname{gen}(\emptyset)=0$, 
while $\operatorname{gen}(\overline{F}_\epsilon^{\overline{c}_k})
\ge \operatorname{gen}(S_\epsilon^k)\ge k+1$ because 
$S_\epsilon^k \subseteq \overline{F}_\epsilon^{\overline{c}_k}
\subseteq H^s(\Omega)\setminus \{0\}$.

Note that these $(k+1)$ pairs of critical points include also the one 
implied by the minimizers $\pm 1$; so we can assume that 
$(-u_{0,\epsilon},u_{0, \epsilon})=(-1,+1)$.

On the contrary, the other solutions are not minimizers for the functional 
$\overline{F}_\epsilon$ if $\Omega$ is a connected domain. Indeed it results
\[
\overline{F}_\epsilon(u_{i,\epsilon})>0\quad \forall  
\epsilon \in (0,\epsilon_k)\text{ and } i=0,1,\dots, k
\]
because if $F_\epsilon(u_{i,\epsilon})=\overline{F}_\epsilon(u_{i,\epsilon})=0$, 
then we should have
\[
\int_{\Omega}{\int_{\Omega}{\frac{|u_{i,\epsilon}(x)
-u_{i,\epsilon}(y)|^2}{|x-y|^{n+2s}}}\,dx\,dy}=0\quad \text{and}\quad 
\overline{W}(u_{i,\epsilon})\equiv 0\quad \text{ in } \Omega
\]
and so $u_{i, \epsilon}$ should be a constant function with value $+1$ or $-1$.

Moreover let us remark that for all $\epsilon \in (0,\epsilon_k)$ and 
$i=1,\dots k$ we have
\begin{equation}\label{min}
F_\epsilon(u_{i,\epsilon})\ge \min 
\big\{ \overline{F}_\epsilon(u):u\in H^s(\Omega), \int_{\Omega}{u\,dx}=0\big\}.
\end{equation}
In fact, we assume that
\[
\min \big\{ \overline{F}_\epsilon(u):u\in H^s(\Omega), \int_{\Omega}{u\,dx}
=0\big\}>0,
\]
otherwise $\eqref{min}$ would be obvious. Then, for every $\overline{c}_1>0$ such that
\[
\overline{c}_1<\min \big\{ \overline{F}_\epsilon(u):u\in H^s(\Omega), 
\int_{\Omega}{u\,dx}=0\big\},
\]
we would have clearly $\operatorname{gen}(\overline{F}_\epsilon^{\overline{c}_1})=1$ 
because below $c_1$ the mean is non zero and we can use it as odd function 
into $\mathbb{R}^1$ in the genus definition, see Definition \ref{genus}; 
thus, if \eqref{min} were false, the solutions would belong to a set of genus one, 
in contradiction with their construction in Theorem \ref{THilbert}.

Now,  to prove \eqref{stimamin}, let us replace the function $\overline{W}$ 
appearing in the definition of functional $\overline{F}_\epsilon$ by a sequence 
of functions $(\overline{W}_j)_j$ and denote by $(\overline{F}_\epsilon^j)_j$ 
the corresponding sequence of new functionals. Assume moreover that the 
functions $\overline{W}_j$ satisfy the same properties as $\overline{W}$ 
for all $j\in \mathbb{N}$ and that
\begin{equation}\label{limW}
\lim_{j\to \infty}{\overline{W}_j(t)}=+\infty \quad \text{for } |t|>1.
\end{equation}
Then property \eqref{min} holds for the higher critical values of the functional 
$\overline{F}_\epsilon^j$ for all $j\in \mathbb{N}$ and so 
\eqref{stimamin} follows for $j$ large enough, taking into account that
\begin{align*}
&\lim_{j\to \infty}{\min \big\{\overline{F}_\epsilon^j(u):u\in H^s(\Omega),
\int_{\Omega}{u\,dx}=0\big\}}\\
&=\min \big\{F_\epsilon(u):u\in H^s(\Omega),|u(x)|\le 1\; \forall x\in \Omega, 
\int_{\Omega}{u\,dx}=0\big\}
\end{align*}
because of \eqref{limW}.
\end{proof}

\end{document}